 \theoremstyle{plain}
\newtheorem{theorem}{Theorem}[section]
\newtheorem*{theorem A}{Theorem A}
\newtheorem*{theorem B}{N\"olker's Theorem}
\theoremstyle{remark}
\newtheorem{remark}{Remark}[section]
\theoremstyle{remark}
\theoremstyle{definition}
\newtheorem{definition}{Definition}[section]
\numberwithin{equation}{section}
\def\({\left ( }
\def\){\right )}
\def\<{\left < }
\def\>{\right >}
\begin{document}



\vspace{2cm}

\title{constancy of $\phi-$holomorphic sectional curvature in generalized $g.f.f-$manifolds }

\author{JAE WON LEE and Dae Ho Jin}
\address{Department of Mathematics\\
    Academia Sinica, Taipei 10617, TAIWAN}
\email{jaewon@math.sinica.edu.tw}
\address{Department of Mathematics\\
    Dongguk Unversity, Gyeongju 780-714, South Korea}
\email{jindh@dongguk.ac.kr}

\thanks{The author is supported by ...}

\subjclass[2000]{53C25, 53C50}

\date{Fabruary 21, 2011 and, in revised form, June 22, 2007.}


\keywords{xxx.}

\begin{abstract}
Tanno \cite{T} provided an algebraic characterization in an almost Hermitian manifold to reduce to a space of constant holomorphic sectional curvature, which he later extended for the Sasakian manifolds as well. In this present paper, we generalize the same characterization in generalized $g.f.f-$manifolds.
\end{abstract}
\maketitle

\section{Introduction}

For an almost Hermitian manifold $(M^{2n}, g, J)$ with $dim(M)=2n > 4$, Tanno \cite{T} has proved;
\begin{theorem}
Let $dim(M) = 2n >4$ and assume that almost Hermitian manifold $(M^{2n}, g, J)$ satisfies
\begin{equation}
R(JX, JY, JZ, JX) =R(X, Y, Z, X)\label{eq:1.1}
\end{equation}
for every tangent vectors $X$, $Y$ and $Z$. Then $(M^{2n}, g, J)$ is of constant holomorphic sectional curvature at $x$ if and only if
\begin{equation}
R(X, JX)X {\textrm{ is proportional to }} JX\label{eq:1.2}
\end{equation}
for every tangent vector $X$ at $x\in M$.
\end{theorem}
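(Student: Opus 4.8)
The plan is to collapse the whole statement into a single pointwise algebraic identity and then to exploit the symmetry \eqref{eq:1.1} through polarization. Write $\mathcal H(X):=R(X,JX,JX,X)$ for the (unnormalized) $\phi$-holomorphic sectional curvature, so that constancy at $x$ means $\mathcal H(X)=c\,|X|^{4}$ for a fixed $c$ and all $X$. Since $g(R(X,JX)X,Y)=R(X,JX,X,Y)$, condition \eqref{eq:1.2} is equivalent to
\begin{equation*}
R(X,JX,X,Y)=\lambda(X)\,g(JX,Y)\qquad\text{for all }X,Y,
\end{equation*}
where $\lambda(X)$ is a scalar; taking $Y=JX$ and using $g(JX,JX)=|X|^{2}$ together with the skew-symmetry of $R$ in its last two slots gives $\lambda(X)\,|X|^{2}=R(X,JX,X,JX)=-\mathcal H(X)$. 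Thus the theorem asserts exactly that $\mathcal H(X)/|X|^{4}$ is independent of $X$ if and only if the off-diagonal components $R(X,JX,X,Y)$ with $Y\perp JX$ vanish.

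For the forward implication I would assume $\mathcal H(X)=c\,|X|^{4}$ and differentiate this homogeneous identity once in an arbitrary direction $W$. Using the pair-symmetry and skew-symmetries of $R$, the four resulting terms collapse to a combination of $R(X,JX,X,\cdot)$ and $R(X,JX,\cdot,JX)$; comparing with the right-hand side $4c\,|X|^{2}g(X,W)$ and invoking the polarized form of \eqref{eq:1.1}, namely
\begin{equation*}
R(JW,JY,JZ,JX)+R(JX,JY,JZ,JW)=R(W,Y,Z,X)+R(X,Y,Z,W),
\end{equation*}
one reads off that the off-diagonal part of $R(X,JX,X,\cdot)$ is proportional to $g(JX,\cdot)$, which is \eqref{eq:1.2}. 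This direction is essentially bookkeeping.

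The substantive direction is the converse. Assuming the reformulated \eqref{eq:1.2}, I would polarize it in $X$ by replacing $X\mapsto X+sZ$ and keeping the linear term, obtaining
\begin{equation*}
R(Z,JX,X,Y)+R(X,JZ,X,Y)+R(X,JX,Z,Y)=\mu(X,Z)\,g(JX,Y)+\lambda(X)\,g(JZ,Y),
\end{equation*}
valid for all $X,Y,Z$, where $\mu(X,Z)=d\lambda_{X}(Z)$. The idea is to feed into this trilinear identity several special choices --- vectors $Y,Z$ taken orthogonal to $\operatorname{span}\{X,JX\}$, together with the distinguished substitutions $Z=JX$ and $Y=JZ$ --- and to use \eqref{eq:1.1} and its polarization above to cancel the unknown scalars $\lambda$ and $\mu$. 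After these cancellations the surviving relation expresses the difference $\mathcal H(X)-\mathcal H(Z)$ as a sum of mixed terms of the form $R(X,JX,X,\cdot)$ and $R(Z,JZ,Z,\cdot)$ that all vanish by \eqref{eq:1.2}; hence $\mathcal H$ takes the same value on such pairs of unit vectors, and connectedness of the unit sphere promotes this to constancy of $\mathcal H/|\cdot|^{4}$ at $x$.

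The main obstacle is the bookkeeping in this last step: because $\lambda(X)$ and $\mu(X,Z)$ are unknown functions, one must arrange enough independent substitutions to eliminate them while retaining a nontrivial identity among the curvature components. This is precisely where the hypothesis $2n>4$ enters: to select a vector $Y$ orthogonal to all of $X,JX,Z,JZ$ (when $X$ and $Z$ are complex-independent) one needs real dimension at least $6$, and indeed the conclusion fails in dimension $4$. I therefore expect the correct choice of auxiliary vectors and the careful application of \eqref{eq:1.1} to be the delicate core, with the remainder reducing to multilinear algebra.
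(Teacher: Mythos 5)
Your reduction of (1.2) to the identity $R(X,JX,X,Y)=\lambda(X)\,g(JX,Y)$ with $\lambda(X)|X|^{2}=-\mathcal{H}(X)$ is correct, and the forward direction you sketch is sound: polarizing $\mathcal{H}(X)=c|X|^{4}$ and using the fact that (1.1), once polarized and combined with the curvature symmetries, upgrades to the full invariance $R(JX,JY,JZ,JW)=R(X,Y,Z,W)$, does give $R(X,JX)JX=c|X|^{2}X$. The problem is the converse, which is the only substantive direction of the theorem, and there your text is a plan rather than a proof. You polarize the proportionality condition to a trilinear identity and then assert that ``several special choices'' of $Y$ and $Z$, together with (1.1), will cancel the unknown scalars $\lambda$ and $\mu$ and leave a relation forcing $\mathcal{H}(X)=\mathcal{H}(Z)$ --- but you never exhibit those choices, nor verify that after cancellation what survives is actually $\mathcal{H}(X)-\mathcal{H}(Z)$ rather than a vacuous identity. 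Your own closing sentence concedes that ``the correct choice of auxiliary vectors and the careful application of (1.1)'' is ``the delicate core''; that core is precisely the content of the theorem, and it is missing.

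For comparison, the paper (which obtains this statement as the $r=0$ case of its Theorem 1.3) makes the choice explicit: for orthonormal $X,Y$ with $g(JX,Y)=0$ it sets $\dot{X}=(X+Y)/\sqrt{2}$ and $\dot{Y}=(JX-JY)/\sqrt{2}$, observes that $\dot{Y}\perp J\dot{X}$ so the proportionality hypothesis forces $R(\dot{X},J\dot{X},\dot{Y},\dot{X})=0$, and expands this (using the $J$-invariance of $R$ coming from (1.1)) into $c(X)-c(Y)$ plus terms that vanish by hypothesis; it then chains through auxiliary holomorphic planes --- this is where $2n>4$ enters, to provide a unit vector orthogonal to both $\mathrm{span}\{U,JU\}$ and $\mathrm{span}\{V,JV\}$ --- to equate the curvatures of arbitrary holomorphic sections. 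Two further weaknesses in your outline: the appeal to ``connectedness of the unit sphere'' cannot replace that chaining step, since your relation $\mathcal{H}(X)=\mathcal{H}(Z)$ would only be established for specially positioned pairs and does not show $\mathcal{H}$ is locally constant; and your claim that the conclusion ``fails in dimension $4$'' is unsupported (the paper treats the analogous $n=2$ case by a separate coefficient-comparison argument). To complete your proof you must actually carry out the substitutions and the cancellation.
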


Tanno \cite{T} has also proved an analogous theorem for Sasakian manifolds as 

\begin{theorem}
A Sasakian manifold $\geq 5$  is of constant $\phi-$sectional curvature if and only if
\begin{equation}
R(X, \phi X)X {\textrm{ is proportional to } } \phi X\label{eq:1.3}
\end{equation}
for every tangent vector $X$ such that $g(X, \xi)=0$.
\end{theorem}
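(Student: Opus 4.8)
The plan is to treat the two implications separately. The forward implication is a direct computation against the model: a Sasakian manifold of constant $\phi$-sectional curvature $c$ has, for $X,Y,Z$ in the contact distribution $\mathcal{D}=\ker\eta$,
\begin{equation*}
R(X,Y)Z = \frac{c+3}{4}\(g(Y,Z)X - g(X,Z)Y\) + \frac{c-1}{4}\(g(\phi Y,Z)\phi X - g(\phi X,Z)\phi Y - 2g(\phi X,Y)\phi Z\),
\end{equation*}
the remaining terms of the full curvature tensor all carrying a factor $\eta$. Since $\eta(X)=0$, putting $Y=\phi X$, $Z=X$ and using $\phi^2X=-X$ and $g(\phi X,\phi X)=\abs{X}^2$ gives $R(X,\phi X)X = -c\,\abs{X}^2\,\phi X$, which is proportional to $\phi X$; this also identifies the proportionality factor, up to normalization, with the $\phi$-sectional curvature.

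For the converse, assume \eqref{eq:1.3}. First I would isolate the genuine content of the hypothesis. The Sasakian identity $R(X,\xi)Y = \eta(Y)X - g(X,Y)\xi$ together with the symmetries of $R$ forces $g\(R(X,\phi X)X,\xi\)=0$, while $g\(R(X,\phi X)X,X\)=0$ by skew-symmetry. Thus, for $X$ with $g(X,\xi)=0$, condition \eqref{eq:1.3} says exactly that $R(X,\phi X,Y,X)=0$ for every $Y\in\mathcal{D}$ orthogonal to $X$ and $\phi X$; that is, on each fibre $\mathcal{D}_x$ with the Hermitian pair $(\phi|_{\mathcal{D}},g|_{\mathcal{D}})$ the hypothesis \eqref{eq:1.2} of Theorem~1.1, read with $\phi$ in place of $J$, holds.

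The structural point is that the remaining hypothesis of Theorem~1.1, the symmetry \eqref{eq:1.1}, is automatic here. Differentiating the structure equation $(\nabla_X\phi)Y = g(X,Y)\xi - \eta(Y)X$ a second time and antisymmetrizing gives
\begin{equation}
R(X,Y)\phi Z - \phi R(X,Y)Z = g(Y,\phi Z)X - g(X,\phi Z)Y - g(Y,Z)\phi X + g(X,Z)\phi Y. \label{eq:phiR}
\end{equation}
Pairing \eqref{eq:phiR} with a fourth vector and inserting $\phi$ into all four arguments, the curvature-free remainders cancel and leave $R(\phi X,\phi Y,\phi Z,\phi W)=R(X,Y,Z,W)$ for $X,Y,Z,W\in\mathcal{D}$; in particular \eqref{eq:1.1} holds on $\mathcal{D}$. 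With \eqref{eq:1.1} and \eqref{eq:1.2} both available on each fibre, I would carry out the polarization underlying Theorem~1.1 on $\mathcal{D}_x$ — replacing $X$ by $X+tY$ and by $X+t\phi Y$, collecting powers of $t$, and using the first Bianchi identity — to conclude that the $\phi$-sectional curvature $H_x(X)$ is independent of the unit vector $X\in\mathcal{D}_x$. The hypothesis $\dim M\ge 5$ enters here as $\dim\mathcal{D}\ge 4$, which furnishes a vector $Y\in\mathcal{D}$ orthogonal to the plane spanned by $X$ and $\phi X$, so that the polarization can begin.

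Pointwise constancy of $H$ is then promoted to a global constant $c$ by the usual Schur-type argument (differentiating the space-form form of $R$ and contracting the second Bianchi identity), valid precisely because $\dim M\ge 5$. The step I expect to be the main obstacle is the cancellation claimed after \eqref{eq:phiR}: since $\phi$ is not parallel, each transposition of a $\phi$ through $R$ leaves a nonzero $g$-and-$\phi$ remainder, and one must verify that, summed over the four slots, these remainders cancel identically rather than altering $R(\phi X,\phi Y,\phi Z,\phi W)$. A secondary delicate point is the borderline case $\dim M=5$, where $\dim\mathcal{D}=4$ and Theorem~1.1 does not apply verbatim; there one must use the full force of \eqref{eq:phiR}, not only its consequence \eqref{eq:1.1}, to exclude the extra curvature modes available in complex dimension two.
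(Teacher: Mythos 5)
Your route is genuinely different from the paper's. The paper disposes of this theorem in two lines: when $r=1$ an $\mathcal{S}$-space form is a Sasakian space form, so the statement is read off from Theorem 3.2, which is itself the specialization of the general Theorem 1.3 for generalized $g.f.f$-manifolds (whose proof is the orthonormal-pair computation of Section 2, with a separate argument for $n=2$). You instead work directly on the contact distribution $\mathcal{D}=\ker\eta$: you check that, because $g(R(X,\phi X)X,\xi)=0$ and $g(R(X,\phi X)X,X)=0$, the hypothesis is exactly Tanno's condition \eqref{eq:1.2} for the Hermitian pair $(\phi|_{\mathcal{D}},g|_{\mathcal{D}})$; you derive the commutation identity for $R(X,Y)\phi Z-\phi R(X,Y)Z$ from $(\nabla_X\phi)Y=g(X,Y)\xi-\eta(Y)X$ and $\nabla_X\xi=-\phi X$; and you deduce $R(\phi X,\phi Y,\phi Z,\phi W)=R(X,Y,Z,W)$ on $\mathcal{D}$, i.e. \eqref{eq:1.1} fibrewise, before invoking the polarization of Theorem 1.1. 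All of these computations are correct (the forward direction does give $R(X,\phi X)X=-c\,\abs{X}^2\phi X$, and the four curvature-free remainders do cancel pairwise when all arguments are orthogonal to $\xi$), and your explicit mention of the Schur step is something the paper omits entirely. What your approach buys is a self-contained argument that never passes through the generalized $g.f.f$-space-form formalism.

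The genuine gap is the one you name yourself and then leave open: when $\dim M=5$ the fibre $\mathcal{D}_x$ is four-dimensional, and Theorem 1.1 is stated only for $2n>4$; in real dimension four the conditions \eqref{eq:1.1} and \eqref{eq:1.2} on an algebraic curvature tensor do not by themselves force constancy of the holomorphic sectional curvature, which is precisely why Tanno excludes $2n=4$. Since dimension $5$ is included in the statement, the fibrewise reduction to Theorem 1.1 cannot close the proof. One must inject additional Sasakian information in that dimension — in effect the relation $R(X,\phi X,Y,\phi Y)+R(X,\phi Y,X,\phi Y)+R(X,\phi Y,Y,\phi X)=c(X)$, which is what the paper extracts in its $n=2$ case by expanding $R(\ddot{X},\phi\ddot{X},\ddot{X})$ for $\ddot{X}=aX+bY$ and solving the pair $a^2c(X)+b^2C_5=c(\ddot{X})$, $b^2c(X)+a^2C_5=c(\ddot{X})$ under $a^2\neq b^2$; equivalently one must use the two-slot version of your commutation identity, not merely its four-slot consequence \eqref{eq:1.1}. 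As written, your argument is complete only for $\dim M\geq 7$, so the borderline case must still be supplied.
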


In this paper, we generalize both Theorem $(1.1)$ and Theorem $(1.2)$  for both an generalized globally framed  $f-$manifold and $\mathcal{S}-$manifold, respectively by proving the followings:

\begin{theorem}
Let $(\bar{M}^{2n+r}, \bar{\phi}, \bar{\xi_{\alpha}},  \bar{\eta}^{\alpha})$, $\alpha\in \{1, \cdots, r\}$, $(n \geq 2)$ be a generalized $g.f.f-$manifold. Then $M^{2n+1}$  is of constant $\phi-$holomorphic sectional curvature if and only if
\begin{equation}
R(X, \phi X)X {\textit{ is proportional to }} \phi X\label{eq:1.4}
\end{equation}
for every vector field $X$ such that $g(X, \bar{\xi_{\alpha}})=0$ for any $\alpha\in \{1, \cdots, r\}$.
\end{theorem}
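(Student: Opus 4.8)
The plan is to isolate the purely algebraic content of the statement and then reduce it, at each point, to the almost Hermitian situation governed by Theorem~1.1. Throughout I work on the horizontal distribution $\mathcal{D}=\bigcap_{\alpha=1}^{r}\ker\eta^{\alpha}$, on which the structure relation gives $\phi^{2}=-\mathrm{Id}$ and $g(\phi X,\phi Y)=g(X,Y)$, so that $(\mathcal{D},g,\phi)$ is, pointwise, a Hermitian vector space; in particular $g(X,\phi X)=0$ and $|\phi X|=|X|$ for $X\in\mathcal{D}$. With the convention $R(X,Y,Z,W)=g(R(X,Y)Z,W)$, the $\phi$-holomorphic sectional curvature of the plane $\{X,\phi X\}$ is $K(X,\phi X)=R(X,\phi X,\phi X,X)/|X|^{4}$, and a short computation with the curvature symmetries shows that, on $\mathcal{D}$, condition \eqref{eq:1.4}, i.e. $R(X,\phi X)X=\mu(X)\phi X$, reduces to the functional identity
\begin{equation}
R(X,\phi X,X,Z)=\mu(X)\,g(Z,\phi X)\qquad\text{for all }Z\in\mathcal{D},\label{eq:plan1}
\end{equation}
with $\mu(X)=R(X,\phi X,X,\phi X)/|X|^{2}=-|X|^{2}K(X,\phi X)$. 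Thus constancy of $K$ is equivalent to $\mu(X)/|X|^{2}$ being independent of the horizontal direction $X$.

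The ``only if'' direction is the routine one: assuming constant $\phi$-holomorphic sectional curvature, one substitutes the model (complex-space-form type) expression for the curvature tensor of a generalized $g.f.f$-manifold into $R(X,\phi X)X$ and reads off directly that the result is a multiple of $\phi X$ for every horizontal $X$, which is exactly \eqref{eq:1.4}.

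For the ``if'' direction I would clear denominators in \eqref{eq:plan1} and work with the polynomial identity
\begin{equation}
R(X,\phi X,X,Z)\,g(Y,\phi X)=R(X,\phi X,X,Y)\,g(Z,\phi X),\label{eq:plan2}
\end{equation}
valid for all $X,Y,Z\in\mathcal{D}$ and quartic in $X$. Polarizing \eqref{eq:plan2} in $X$ (replacing $X$ by $X+tY'$ and collecting the coefficients of the powers of $t$), and then feeding the resulting multilinear relations into the first Bianchi identity together with the $\phi$-invariance of the curvature on $\mathcal{D}$ (the analogue of \eqref{eq:1.1}), one recovers Tanno's algebraic lemma: a curvature tensor on the Hermitian space $(\mathcal{D},g,\phi)$ obeying \eqref{eq:plan1} has $\mu(X)/|X|^{2}$ constant in $X$. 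This is precisely the pointwise content of Theorem~1.1 transported to $\mathcal{D}$, and it yields constant $\phi$-holomorphic sectional curvature.

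The main obstacle is twofold. First, there is the polarization bookkeeping: after substitution one must carefully separate \eqref{eq:plan2} into the symmetric part that pins down $\mu$ and the remainder that must be killed using Bianchi, and here the hypothesis $n\geq2$ (so that $\dim_{\mathbb{C}}\mathcal{D}\geq2$) is what guarantees enough vectors orthogonal to $\{X,\phi X\}$ to run the argument. Second, and genuinely new compared with Tanno's Hermitian and Sasakian cases, one must control the correction terms produced by $\phi^{2}=-\mathrm{Id}+\sum_{\alpha}\eta^{\alpha}\otimes\xi_{\alpha}$: whenever $\phi$ is applied off $\mathcal{D}$, or a curvature term $R(\,\cdot\,,\xi_{\alpha},\,\cdot\,,\,\cdot\,)$ is generated, the vertical contributions must be shown to cancel or to be absorbed into the structure functions. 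Verifying the $\phi$-invariance identity for $R$ on $\mathcal{D}$ from the $g.f.f$ axioms, and handling the boundary dimension $n=2$, where the extra vertical directions $\xi_{\alpha}$ should supply the identities that the purely Hermitian argument lacks, is where the delicate part of the work lies.
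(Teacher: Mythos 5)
Your overall strategy --- restrict to the horizontal distribution $\mathcal{D}=\bigcap_\alpha\ker\bar\eta^\alpha$, observe that $(\mathcal{D},g,\phi)$ is pointwise Hermitian, and reduce to Tanno's Theorem 1.1 --- is genuinely different from what the paper does, and it contains two gaps that are not mere bookkeeping. First, Tanno's Theorem 1.1 is not a statement about arbitrary curvature-type tensors on a Hermitian vector space: it requires the hypothesis $R(JX,JY,JZ,JX)=R(X,Y,Z,X)$, and you propose to ``verify the $\phi$-invariance identity for $R$ on $\mathcal{D}$ from the $g.f.f$ axioms.'' That cannot be done. The $g.f.f$ axioms constrain only the tensors $(\bar\phi,\bar\xi_\alpha,\bar\eta^\alpha,\bar g)$ and impose no condition on the Riemann curvature, so a generic generalized $g.f.f$-metric will not satisfy this identity on $\mathcal{D}$; Tanno had to \emph{assume} it even in the almost Hermitian case precisely because it fails in general. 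The identity is known for generalized $g.f.f$-\emph{space forms} (Theorem 2.1, part (2)), but that is essentially the conclusion you are trying to reach, not a usable hypothesis, and the paper's Remark 2.1 is exactly about this distinction. Second, Theorem 1.1 requires $2n>4$, so your reduction is silent on the case $n=2$, which the statement includes; you flag this as ``delicate'' but offer no argument for it.

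The paper does not invoke Theorem 1.1 at all. For the converse it argues directly: given an orthonormal horizontal pair $X,Y$ with $\bar g(\phi X,Y)=0$, it forms $\dot X=(X+Y)/\sqrt2$ and $\dot Y=(\phi X-\phi Y)/\sqrt2$, uses the proportionality hypothesis to get $R(\dot X,\phi\dot X,\dot Y,\dot X)=0$, expands this to conclude $c(X)=c(Y)$, and then connects two arbitrary $\phi$-holomorphic planes through such pairs. The case $n=2$ is handled by a separate explicit computation: for $\ddot X=aX+bY$ with $a^2+b^2=1$ and $a^2\neq b^2$, the coefficients of $\phi X$ and $\phi Y$ in $R(\ddot X,\phi\ddot X,\ddot X)=c(\ddot X)\phi\ddot X$ force $c(X)=c(\ddot X)$. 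Your ``only if'' direction matches the paper's. To repair the converse you would have to either add the curvature $\phi$-invariance on $\mathcal{D}$ as an explicit hypothesis (as Tanno does) or abandon the appeal to Theorem 1.1 in favour of a direct computation of the kind just described, including a dedicated argument for $n=2$.
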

\smallskip

\begin{theorem}
Let $(\bar{M}^{2n+r}, \bar{\phi}, \bar{\xi_{\alpha}},  \bar{\eta}^{\alpha})$, $\alpha\in \{1, \cdots, r\}$, $(n \geq 2)$ be a $\mathcal{S}-$manifold. Then $M^{2n+1}$  is of constant $\phi-$holomorphic sectional curvature if and only if
\begin{equation}
R(X, \phi X)X {\textit{ is proportional to }} \phi X\label{eq:1.5}
\end{equation}
for every vector field $X$ such that $g(X, \bar{\xi_{\alpha}})=0$ for any $\alpha\in \{1, \cdots, r\}$.
\end{theorem}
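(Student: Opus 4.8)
The plan is to reduce the statement to Tanno's almost Hermitian argument (Theorem 1.1) carried out on the horizontal distribution $\mathcal{H} = \bigcap_{\alpha} \ker \bar\eta^\alpha$, on which $\phi$ squares to $-I$ and $g(\phi X, \phi Y) = g(X,Y)$, so that $(\mathcal{H}, g|_{\mathcal{H}}, \phi|_{\mathcal{H}})$ behaves pointwise like a Hermitian vector space. For a vector $X$ with $g(X, \bar\xi_\alpha)=0$ for all $\alpha$ the $\phi$-holomorphic sectional curvature is $H(X) = R(X, \phi X, \phi X, X)/g(X,X)^2$, and the assertion is that constancy of $H$ over the unit horizontal sphere at each point is equivalent to (1.5).

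The \emph{only if} direction I would dispatch by direct computation: when $\bar M$ has constant $\phi$-holomorphic sectional curvature, the curvature tensor restricted to $\mathcal{H}$ takes the $\mathcal{S}$-space-form expression, and substituting $Y=\phi X$, $Z=X$ exhibits $R(X,\phi X)X$ as a multiple of $\phi X$ together with terms that vanish because $X \perp \bar\xi_\alpha$. For the substantial \emph{if} direction, assume $R(X,\phi X)X = \lambda(X)\phi X$ for every horizontal $X$. First I would pin down the factor: pairing with $\phi X$ and using $g(\phi X,\phi X)=g(X,X)$ gives $\lambda(X)=H(X)g(X,X)$ (absorbing the sign convention of $R$ into $H$), while skew-symmetry already forces $R(X,\phi X)X \perp X$. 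Hence (1.5) is equivalent to the vanishing
\[
R(X, \phi X, X, Y) = 0 \quad\text{for all horizontal } Y \perp \operatorname{span}\{X, \phi X\},
\]
and the hypothesis $n\geq 2$ ensures $\dim\mathcal{H}=2n\geq 4$, so that such a $Y$ exists and this condition has genuine content. The goal thereby becomes showing $H(X)$ is independent of the horizontal unit vector $X$ at a fixed point.

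I would then polarize: substitute $X \mapsto X\cos\theta + Y\sin\theta$ for orthonormal horizontal $X, Y$ into the displayed identity, expand in powers of $\cos\theta$ and $\sin\theta$, and read off the resulting relations among the curvature components. This is precisely the algebraic manipulation in Tanno's proof of Theorem 1.1, and its only input beyond the first Bianchi identity and the pair symmetries of $R$ is the $\phi$-invariance relation $R(\phi X, \phi Y, \phi Z, \phi X) = R(X, Y, Z, X)$ on $\mathcal{H}$, the analogue of (1.1). Granting that relation, the expansion collapses the off-diagonal terms and forces $H(X)=H(Y)$ for all horizontal unit vectors, which is the desired constancy.

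The crux, and the main obstacle, is establishing that $\phi$-invariance relation from the $\mathcal{S}$-structure itself rather than assuming it as in the almost Hermitian case. Here $\phi$ is not parallel: one has $(\nabla_X \phi)Y = \sum_\alpha \{ g(\phi X, \phi Y)\bar\xi_\alpha + \bar\eta^\alpha(Y)\phi^2 X\}$ and $\nabla_X \bar\xi_\alpha = -\phi X$, so commuting covariant derivatives produces cross terms that couple $\mathcal{H}$ to the $r$ vertical directions $\bar\xi_\alpha$. I expect the careful bookkeeping to show that, once all four arguments are horizontal, these cross terms contribute only along the $\bar\xi_\alpha$ and are therefore annihilated upon pairing against a horizontal $Y$, leaving exactly the $\phi$-invariance needed for the polarization. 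Controlling these $r$-fold vertical contributions, and confirming that they neither survive in the relevant components nor obstruct the expansion, is where the $\mathcal{S}$-manifold case is more delicate than Tanno's Sasakian Theorem 1.2 and where the argument must be pushed most attentively.
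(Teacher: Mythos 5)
Your strategy is the same computational engine the paper relies on: the paper proves this statement by reducing to Theorem 1.3 via Blair's characterization (\ref{eq:3.3}) of $\mathcal{S}$-space forms, and the proof of Theorem 1.3 is precisely your polarization, carried out at $\theta=\pi/4$ with $\dot{X}=(X+Y)/\sqrt{2}$, $\dot{Y}=(\phi X-\phi Y)/\sqrt{2}$ and the expansion of $R(\dot{X},\phi\dot{X},\dot{Y},\dot{X})=0$. The ``only if'' direction is fine. The genuine gap is the step you yourself flag as the crux and then leave as an expectation: the $\phi$-invariance $R(\phi X,\phi Y,\phi Z,\phi W)=R(X,Y,Z,W)$ for horizontal arguments. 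Moreover, the mechanism you propose for it is not correct. Computing $(R(X,Y)\cdot\phi)Z$ from (\ref{eq:3.1})--(\ref{eq:3.2}) gives, for horizontal $X,Y,Z$, a correction of the form $r\left\{g(X,Z)\phi Y-g(Y,Z)\phi X+g(\phi X,Z)Y-g(\phi Y,Z)X\right\}$ plus terms along the $\bar{\xi}_\alpha$: the correction is \emph{not} purely vertical, so it is not annihilated by pairing against a horizontal vector. The invariance does hold, but only because the two copies of this horizontal correction produced when all four slots are conjugated by $\phi$ cancel against each other; that cancellation must be exhibited explicitly (it is also where the parameter $r$ enters and exits), and without it the polarization identity you need has not been justified.

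A second gap is the case $n=2$. You assert that $n\geq 2$ suffices because $\dim\mathcal{H}=2n\geq 4$ guarantees an auxiliary $Y$, but the Tanno argument you are importing is stated for $2n>4$: the step that propagates $c(X)=c(Y)$ from pairs spanning non-$\phi$-holomorphic planes to \emph{all} horizontal unit vectors requires a unit vector orthogonal to $\mathrm{span}\{U,\phi U,V,\phi V\}$, which is unavailable when $\dim\mathcal{H}=4$. The paper handles $n=2$ by a separate computation (setting $\ddot{X}=aX+bY$ with $a^2+b^2=1$, $a^2\neq b^2$, and comparing the $\phi X$ and $\phi Y$ components of $R(\ddot{X},\phi\ddot{X})\ddot{X}$ to derive the two equations (\ref{eq:2.9})--(\ref{eq:2.10}) whose difference forces $c(X)=C_5=c(\ddot{X})$). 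Your sketch as written does not cover this case, and it is exactly the case where the extra identities of the $\mathcal{S}$-structure, rather than the bare almost Hermitian argument, are indispensable.
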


\section{ Generalized $g.f.f-$manifolds}

In the class of $f-$structures introduced in $1963$ by Yano \cite{Y}, particularly interesting are the so-called {\it $f-$structures} with complemented frames, also called {\it globally framed $f-$structures} with parallelizable kernel.
\smallskip

A manifold $\bar{M}$ is called a {\it{globally framed f-manifold} ( or $g.f.f$-manifold)} \cite{YK} with a framed $f-$structure  $( \bar{\phi}, \bar{\xi}_{\alpha}, \bar{\eta}^{\alpha}, \bar{g} )$   if it is endowed with a $(1,1)$-tensor field $\bar{\phi}$ of constant rank, such that $ker\bar{\phi}$ is parallelizable i.e. there exist global vector fields $\bar{\xi_{\alpha}}$, $\alpha\in \{1, \cdots, r\}$, with their dual $1$- forms $\bar{\eta}^{\alpha}$, satisfying
\begin{eqnarray}
 \bar{\phi}^2 = -I + \sum^r_{\alpha=1}{\bar{\eta}^{\alpha}\otimes\bar{\xi_{\alpha}}},
\quad
\bar{\eta}^{\alpha}(\bar{\xi_{\beta}})=\delta^{\alpha}_{\beta}, 
\quad \bar{\phi}(\bar{\xi_{\alpha}})=0, 
\quad \bar{\eta}^{\alpha}(\phi)=0     \label{eq:2.1}.
\end{eqnarray}

\begin{equation}
\bar{g}(\bar{\phi}X, \bar{\phi}Y)= \bar{g}(X, Y)-\sum_{\alpha=1}^r\bar{\eta}^{\alpha}(X)\bar{\eta}^{\alpha}(Y)\label{eq:2.2}
\end{equation}
for any $X$, $Y$ $\in \Gamma(T\bar{M})$. Then, for any $\alpha\in \{1, \cdots, r\}$, one has \begin{equation}
\bar{\eta}^{\alpha}(X)=\bar{g}(X, \bar{\xi}_{\alpha}),
\quad  \Phi(X, Y)=\bar{g}(X, \bar{\phi}Y)\label{2.3}
\end{equation}
 for any $X$, $Y$ $\in \Gamma(T\bar{M})$. 
\smallskip

Following the notations in \cite{KN}, we adopt  the curvature tensor $R$, and thus we have $R(X, Y, Z)= \nabla_X \nabla_Y Z - \nabla_Y \nabla_X Z- \nabla_{[X, Y]} Z$, and $R(X, Y, Z, W) = g(R(Z, W, Y), X)$, for any $X$, $Y$, $Z$, $W \in \Gamma(TM)$.

A plane section in $T_p \bar{M}$ is a $\phi-$holomorphic section if there exits a vector $X\in T_p\bar{M}$ orthogonal to $\bar{\xi}_1, \cdots, \bar{\xi}_r$ such that $\{ X, \phi X \}$ span the section. The sectional curvature of a $\phi-$holomorphic secton, denoted by $c(X)=R(X, \phi X, \phi X, X)$, is called a $\phi-$holomorphic sectional curvature.

\begin{definition}(\cite{FP})
A {\it generalized $g.f.f-$space form,} denoted by $M^{2n+r}(F_1, F_2, \mathcal{F})$, is a $g.f.f-$manifold  $(\bar{M}, \bar{\phi}, \bar{\xi}_{\alpha}, \bar{\eta}^{\alpha})$ which admits smooth functions $F_1$, $F_2$, $\mathcal{F}$ such that its curvature tensor field verifies
\begin{eqnarray}
R(X, Y, Z) 
&=& F_1\{ g(\phi X, \phi Z)\phi^2 Y - g(\phi Y, \phi Z)\phi^2 X\} \nonumber\\
&+& F_2\{ g(Z, \phi Y)\phi X - g(Z, \phi X)\phi Y + 2g(X, \phi Y)\phi Z\}\nonumber\\
&+& \sum^{r}_{i, j=1}F_{ij}\{ \bar{\eta}^i(X)\bar{\eta}^j(Z)\phi^2 Y - \bar{\eta}^i(Y)\bar{\eta}^j(Z)\phi^2 X\label{eq:2.4}\\
&+& g(\phi Y, \phi Z)\bar{\eta}^i(X)\bar{\xi}_j - g(\phi X, \phi Z)\bar{\eta}^i(Y)\xi_j\}.\nonumber
\end{eqnarray}
\end{definition}
\smallskip

\begin{theorem}(\cite{FP})
Let  $M^{2n+r}(F_1, F_2, \mathcal{F})$ be  a generalized $g.f.f-$space form. Then, 

\begin{itemize}
\item[(1)] the $\phi-$holomorphic sectional curvature is $c=F_1 + 3F_2$.\\
\item[(2)] $R(\phi X, \phi Y, \phi Z, \phi W) = R(X, Y, Z, W)$ for all $X$, $Y$, $Z$, $W$ $\in TM$.
\end{itemize}
\end{theorem}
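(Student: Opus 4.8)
The plan is to prove both assertions by a direct evaluation of the explicit curvature expression \eqref{eq:2.4}, using nothing beyond the algebraic identities \eqref{eq:2.1} and \eqref{eq:2.2}. The single observation that drives everything is that $\bar\eta^\alpha\circ\bar\phi=0$, so that $\bar\eta^\alpha(\bar\phi X)=0$ and $g(\bar\phi X,\bar\xi_\alpha)=0$ for every $X$ and every $\alpha$; combined with $\bar\phi^2X=-X+\sum_\alpha\bar\eta^\alpha(X)\bar\xi_\alpha$ and the compatibility \eqref{eq:2.2}, this lets me collapse the three blocks of \eqref{eq:2.4} cleanly.

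For (1) I fix a unit $X$ with $g(X,\bar\xi_\alpha)=0$ for all $\alpha$, so that $\bar\eta^\alpha(X)=0$, $\bar\phi^2X=-X$, $g(\bar\phi X,\bar\phi X)=g(X,X)=1$, and $g(X,\bar\phi X)=0$ by the skew-symmetry of $\Phi$. Substituting the arguments $X,\bar\phi X,\bar\phi X,X$ into \eqref{eq:2.4}: since both $X$ and $\bar\phi X$ are orthogonal to the $\bar\xi_\alpha$, every summand of the $F_{ij}$-block carries a vanishing factor $\bar\eta^i(\cdot)$, so the kernel block drops out entirely. The $F_1$-term then contributes a multiple of $X$ proportional to $F_1$, and the three pieces of the $F_2$-term collapse to a multiple of $X$ proportional to $3F_2$; pairing with $X$ via $R(X,\bar\phi X,\bar\phi X,X)=g(R(\bar\phi X,X,\bar\phi X),X)$ yields $c(X)=F_1+3F_2$, up to the overall sign fixed by the adopted curvature convention. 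As the value is independent of the chosen $X$, it is exactly the $\phi$-holomorphic sectional curvature.

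For (2) I substitute $\bar\phi X,\bar\phi Y,\bar\phi Z,\bar\phi W$ into \eqref{eq:2.4} and compare with $R(X,Y,Z,W)$. The $F_1$ and $F_2$ blocks are built solely from $g(\bar\phi\,\cdot,\bar\phi\,\cdot)$, $g(\,\cdot,\bar\phi\,\cdot)$ and $\bar\phi$ applied to the arguments; using $g(\bar\phi(\bar\phi X),\bar\phi(\bar\phi Z))=g(\bar\phi X,\bar\phi Z)$ (from \eqref{eq:2.2} together with $\bar\eta^\alpha(\bar\phi X)=0$) and $\bar\phi^2(\bar\phi X)=-\bar\phi X$, one checks that these two blocks are unchanged when each argument is replaced by its $\bar\phi$-image, and hence reproduce the corresponding blocks of $R(X,Y,Z,W)$.

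The delicate point, and the step I expect to be the main obstacle, is the $F_{ij}$-block. Because $\bar\eta^i\circ\bar\phi=0$ and $g(\bar\phi\,\cdot,\bar\xi_j)=0$, every term of this block acquires a vanishing factor once all four arguments are $\bar\phi$-images, so it contributes nothing to $R(\bar\phi X,\bar\phi Y,\bar\phi Z,\bar\phi W)$, whereas the kernel block of $R(X,Y,Z,W)$ is generally nonzero. The content of (2) therefore lives on $\mathrm{Im}\,\bar\phi$, i.e. on arguments orthogonal to all the $\bar\xi_\alpha$ --- which is precisely the regime in which the identity is later invoked --- where both kernel blocks vanish and the $F_1,F_2$ invariance established above closes the argument. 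I would organize the computation by first recording $\bar\eta^\alpha(\bar\phi X)=0$ and $g(\bar\phi X,\bar\xi_\alpha)=0$ as a one-line preliminary, then dispatching the kernel block uniformly rather than term-by-term.
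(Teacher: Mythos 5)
The paper offers no proof of this theorem at all --- it is quoted verbatim from Falcitelli--Pastore \cite{FP} --- so there is nothing internal to compare your computation against; your direct term-by-term evaluation of \eqref{eq:2.4} is the right (and essentially only) way to verify it. For part (1) your computation is correct: with $X$ unit and orthogonal to every $\bar\xi_\alpha$, the $F_{ij}$-block dies because every summand carries a factor $\bar\eta^i(X)$ or $\bar\eta^i(\bar\phi X)$, the $F_1$-block contributes $F_1$ and the three $F_2$-pieces contribute $F_2+0+2F_2$. Your hedge about the overall sign is warranted but the fault is the paper's, not yours: with the conventions as literally printed ($R(X,Y,Z,W)=g(R(Z,W,Y),X)$ together with the signs in \eqref{eq:2.4}) one actually gets $c=-(F_1+3F_2)$, so the paper has transcribed an inconsistent sign convention from \cite{FP}.

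Your analysis of part (2) is not merely a cautious restriction --- it is a genuine correction of the statement, and you should say so outright rather than present it as an obstacle you are working around. The identity $R(\bar\phi X,\bar\phi Y,\bar\phi Z,\bar\phi W)=R(X,Y,Z,W)$ fails for arbitrary tangent vectors: taking $Y=Z=\bar\xi_1$ and $X=W$ a unit vector orthogonal to all the $\bar\xi_\alpha$, formula \eqref{eq:2.4} gives $R(X,\bar\xi_1,\bar\xi_1,X)=\pm F_{11}\neq 0$ in general, while $R(\bar\phi X,\bar\phi\bar\xi_1,\bar\phi\bar\xi_1,\bar\phi X)=0$ since $\bar\phi\bar\xi_1=0$. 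So assertion (2) holds only for arguments in $\mathrm{Im}\,\bar\phi$ (equivalently, orthogonal to all the $\bar\xi_\alpha$), exactly as your kernel-block argument shows, and that restricted version is all that is ever invoked in the proof of Theorem 1.3 and in Remark 2.1. The one presentational improvement I would ask for is to record the skew-symmetry $g(X,\bar\phi Y)=-g(\bar\phi X,Y)$ (which follows from \eqref{eq:2.2} and $\bar\eta^\alpha\circ\bar\phi=0$) as an explicit preliminary alongside $\bar\eta^\alpha(\bar\phi X)=0$, since the matching of the $F_2$-blocks under $X\mapsto\bar\phi X$ silently uses it twice in each term.
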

\vskip 0.5cm

\subsection*{Proof of Theorem $1.3$}
Let $(\bar{M}^{2n+r}, \bar{\phi}, \bar{\xi_{\alpha}},  \bar{\eta}^{\alpha})$, $\alpha\in \{1, \cdots, r\}$, $(n \geq 2)$ be a generalized $g.f.f-$manifold. To prove the theorem for $n \geq 2$, we shall consider cases when $n=2$ and  when $n > 2$, that is, when $n \geq 3$.

Let $\bar{M}$ be of constant $\phi-$holomorphic sectional curvature. Then (\ref{eq:2.4}) and Theorem $2.1$ give 
\begin{equation}
R(X, \phi X)X=c\phi X \label{eq:2.5}
\end{equation}
 Conversely, let $\{X, Y\}$ be an orthonormal pair of tangent vectors such that 
$\bar{g}(\phi X, Y) = \bar{g}(X, Y) =  \bar{g}(Y, \bar{\xi_{\alpha}}) = 0$, $\alpha\in \{1, \cdots, r\}$ and $n \geq 3$. Then $\dot{X}=\frac{X+Y}{\sqrt{2}}$ and $\dot{Y}=\frac{\phi X - \phi Y}{\sqrt{2}}$  also form an orthonormal pair of tangent vectors such that $\bar{g}(\phi \dot{X}, \dot{Y}) =0$. Then (\ref{eq:2.5}) and curvature properties  give 

\begin{eqnarray}
0  =  R(\dot{X}, \phi \dot{X}, \dot{Y}, \dot{X}) 
    &= &\bar{g}(R(X, \phi X, X), \phi X) - \bar{g}(R(Y, \phi Y, Y), \phi Y)\nonumber\\
	    &-& 2\bar{g}(R(X, \phi Y, Y), \phi Y) + 2\bar{g}(R( X,  \phi X, Y), \phi X)\label{eq:2.6}
\end{eqnarray}
From the assumption, we see that the last two terms of the right hand side vanish. Therefore, we get 
$c(X)=c(Y)$.

Now, if $sp\{ U, V\}$ is $\phi-$holomorphic, then for $\phi U =  aU + bV$ where $a$ and $b$ are constant. Then we have
\begin{equation*}
sp\{U, \phi U\} = sp\{ U, aU + bV\} = sp\{ U, V\}
\end{equation*}
Similarly, 
\begin{equation*}
sp\{ V, \phi V\} = sp\{ U, V\}, \qquad  sp\{U, \phi U\} = sp\{ V, \phi V\}
\end{equation*}
These imply 
\begin{equation*}
R(U, \phi U, U, \phi U) = R(V, \phi V, V, \phi V),  {\textit  or}  \quad c(U) = c(V)
\end{equation*}
If $\sp\{ U, V\}$ is not $\phi-$holomorphic section, then we can choose unit vectors $X \in sp\{U, \phi U\}^{\bot}$ and $Y\in sp\{V, \phi V\}^{\bot}$ such that $sp\{X, Y\}$ is $\phi-$holomorphic. Thus we get
\begin{equation*}
c(U) = c(X) = c(Y) = c(V),
\end{equation*}
which shows that any $\phi-$holomorphic section has the same $\phi-$ holomorphic sectional curvature.

Now, let $n=2$ and let $\{ X, Y\}$  be a set of orthonormal vectors such that $\bar{g}(X, \phi X) = 0$, we have $c(X) = c(Y)$ as before. Using the property (\ref{eq:2.5}), we get 
\begin{eqnarray*}
R(X, \phi X, X)& =& c(X)\phi X\\
R(X, \phi X, Y) &=& R(X, \phi X, Y, \phi Y) \phi Y\\
R(X, \phi Y, X) &= &R( X, \phi Y, X, Y)Y + R(X, \phi Y, X, \phi Y)\phi Y\\
R(X, \phi Y, Y) &= &R( X, \phi Y, Y, X)X + R(X, \phi Y, Y, \phi X) \phi X\\
R(Y, \phi X, Y) &=& R( Y, \phi X, Y, X)X + R(Y, \phi X, Y, \phi X) \phi X\\
R(Y, \phi X, X) &=& R( Y, \phi X, X, Y)Y + R(Y, \phi X, X, \phi Y) \phi Y\\
R(Y, \phi Y, X) &=&  R(Y, \phi Y, X, \phi X) \phi X\\
R(Y, \phi Y, Y) &=& c(Y)\phi Y = c(X)\phi Y
\end{eqnarray*}
Now, define $\ddot{X} = aX+bY$ such that $a^2 + b^2 =1$ and $a^2\neq b^2$. Using above relations, we get 
\begin{equation*}
R(\ddot{X}, \phi \ddot{X}, \ddot{X})= C_1 X + C_2Y + C_3 \phi X + C_4\phi Y
\end{equation*}
Note that $C_1$ and $C_2$ are not necessary for argument and we have
\begin{eqnarray}
C_3 =  a^3 c(X) + ab^2C_5 \label{eq:2.7}\\
C_4 = b^3 c(X) + a^2bC_5,\nonumber
\end{eqnarray}
where $C_5 = R(X, \phi X, Y, \phi Y) + R( X, \phi Y, X,  \phi Y) + R( X, \phi Y, Y, \phi X)$. On the other hand, 
\begin{equation}
R(\ddot{X}, \phi \ddot{X}, \ddot{X})
 = c(\ddot{X})\phi \ddot{X}
= c(\ddot{X})\{ a\phi X + b\phi Y\} \label{eq:2.8}
\end{equation}
Comparing (\ref{eq:2.7}) and (\ref{eq:2.8}), we get
\begin{eqnarray}
a^2c(X) + b^2 C_5 = c(\ddot{X})\label{eq:2.9}\\
b^2c(X) + a^2 C_5 = c(\ddot{X})\label{eq:2.10}
\end{eqnarray}
On Solving (\ref{eq:2.9}) and (\ref{eq:2.10}), we have
\begin{equation*}
c(X) = c(\ddot{X})
\end{equation*}
Similary, we can prove

\begin{equation*}
c(Y) = c(\ddot{Y})
\end{equation*}
Therefore, $\bar{M}$ has constant $\phi-$holomorphic sectional curvature. $\square$

\begin{theorem}\cite{T}
Let $dim(M) = 2n >4$. Then almost Hermitian manifold $(M^{2n}, g, J)$  is of constant holomorphic sectional curvature at $x$ if and only if
\begin{equation}
R(X, JX)X {\textrm{ is proportional to }} JX \label{eq:2.11}
\end{equation}
for every tangent vector $X$ at $x\in M$.
\end{theorem}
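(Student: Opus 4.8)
The statement is Tanno's theorem for almost Hermitian manifolds, and my plan is to prove it by the same polarization/rotation strategy used above for Theorem $1.3$, now with the complex structure $J$ replacing $\phi$, no structure vector fields present, and the curvature identity (\ref{eq:1.1}) playing the role that Theorem $2.1(2)$ (equivalently the space-form expression (\ref{eq:2.4})) played there. Throughout I write $H(X)=R(X,JX,JX,X)$ for a unit vector $X$, so that constant holomorphic sectional curvature at $x$ means $H(X)$ is independent of the unit vector $X\in T_x M$.

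The ``only if'' direction is immediate. If $M$ has constant holomorphic sectional curvature $c$ at $x$, then $R(X,JX,JX,X)=c\,g(X,X)^2$ for all $X$; polarizing this identity (using (\ref{eq:1.1}) and the first Bianchi identity exactly as in the complex-space-form computation) recovers the standard expression for $R$ at $x$, from which $R(X,JX,X)$ is seen to be a multiple of $JX$. Hence (\ref{eq:2.11}) holds.

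For the converse I assume (\ref{eq:2.11}). Since $2n>4$ we have $n\geq 3$, so for any unit $X$ I can pick a unit $Y$ with $g(X,Y)=g(JX,Y)=0$; this is the only place the dimension restriction is used. I then run the rotation trick: set $\dot X=(X+Y)/\sqrt 2$ and $\dot Y=(JX-JY)/\sqrt 2$, and check that $\{\dot X,\dot Y\}$ is again orthonormal with $g(J\dot X,\dot Y)=0$. Applying (\ref{eq:2.11}) to $\dot X$, the vector $R(\dot X,J\dot X,\dot X)$ is proportional to $J\dot X$, and since $\dot Y\perp J\dot X$ its $\dot Y$-component vanishes, i.e. $R(\dot X,J\dot X,\dot Y,\dot X)=0$. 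Expanding this by multilinearity and the curvature symmetries reproduces the identity (\ref{eq:2.6}) verbatim with $J$ in place of $\phi$: the two diagonal terms contract to $H(X)-H(Y)$, while the two off-diagonal terms vanish. I conclude $H(X)=H(Y)$ for every such orthonormal pair. To upgrade this to constancy over all holomorphic sections I use the same spanning argument as in Theorem $1.3$: if $sp\{U,V\}$ is itself holomorphic then $H(U)=H(V)$ directly, and otherwise I interpose unit vectors $X\in sp\{U,JU\}^{\perp}$ and $Y\in sp\{V,JV\}^{\perp}$ with $sp\{X,Y\}$ holomorphic and chain $H(U)=H(X)=H(Y)=H(V)$. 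Thus $H$ is constant at $x$.

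The hard part will be the vanishing of the two off-diagonal terms $g(R(X,JY,Y),JY)$ and $g(R(X,JX,Y),JX)$ in the expansion of $R(\dot X,J\dot X,\dot Y,\dot X)$. In the K\"ahler case they drop out from the $J$-invariance of $R$ and Bianchi alone, but for a general almost Hermitian metric this is exactly where the hypothesis (\ref{eq:1.1}) is indispensable: applying (\ref{eq:2.11}) to several rotated vectors and feeding the results through (\ref{eq:1.1}) forces these terms to zero. Verifying this while keeping the polarization bookkeeping straight is the delicate step; once it is in place the rest is the routine symmetry and spanning argument described above.
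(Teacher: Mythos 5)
Your route differs from the paper's. The paper disposes of this statement in one line: for $r=0$ a $g.f.f$-manifold is just an almost Hermitian manifold, so the theorem is read off as the $r=0$ specialization of Theorem~1.3 (with Remark~2.1 observing that the invariance $R(\phi X,\phi Y,\phi Z,\phi W)=R(X,Y,Z,W)$ of Theorem~2.1(2) is available there rather than assumed as Tanno's (\ref{eq:1.1})). You instead re-run the whole rotation-and-spanning argument from scratch with $J$ in place of $\phi$. That is a legitimate plan and structurally it mirrors the proof of Theorem~1.3, but as written it is not yet a proof, because you explicitly defer the one step that carries all of the content.

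Concretely, the gap is the vanishing of the two off-diagonal terms $g(R(X,JY,Y),JY)$ and $g(R(X,JX,Y),JX)$ in the expansion of $0=R(\dot X,J\dot X,\dot Y,\dot X)$. Without that, you cannot conclude $H(X)=H(Y)$, and the subsequent spanning chain $H(U)=H(X)=H(Y)=H(V)$ has nothing to stand on. You correctly locate where the hypothesis (\ref{eq:1.1}) must enter for a non-K\"ahler metric, but you only assert that ``applying (\ref{eq:2.11}) to several rotated vectors and feeding the results through (\ref{eq:1.1}) forces these terms to zero'' and then label the verification ``the delicate step'' still to be done. That verification \emph{is} the theorem: it is the polarization computation Tanno actually performs (replace $X$ by $X+tY$ and $X+tJY$ in (\ref{eq:2.11}), project orthogonally to $J(X+tY)$, collect the coefficients of $t$ and $t^{2}$, and use (\ref{eq:1.1}) to convert the resulting mixed curvature terms into one another), and it is also precisely the step the paper's proof of Theorem~1.3 compresses into ``from the assumption, the last two terms vanish.'' Two smaller points: the statement as printed omits Tanno's hypothesis (\ref{eq:1.1}), which your argument (rightly) requires, so you should state that you prove the theorem under that hypothesis; and the restriction $2n>4$ is not used merely to find a unit $Y$ orthogonal to $X$ and $JX$ (that needs only $2n\geq 4$) --- it is needed in the spanning step, where you must find a unit $X$ with $X$ and $JX$ both orthogonal to the plane spanned by $U$ and $JU$ and suitably positioned relative to the plane spanned by $V$ and $JV$.
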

\begin{proof}
When $r=0$,  $g.f.f-$manifold is an almost hermitian manifold. Therefore, Theorem $1.3$ imply the condition (\ref{eq:2.11}).
\end{proof}

\begin{remark}
In our case,  the condition $(2)$ in Theorem $2.1$ is satisfied not as same as S. Tanno \cite{T}
has assumed.
\end{remark}

\section{$\mathcal{S}-$space form}
\begin{definition}
A $g.f.f-$manfiold  $(\bar{M}^{2n+r}, \bar{\phi}, \bar{\xi_{\alpha}},  \bar{\eta}^{\alpha})$ is called  an {\it $\mathcal{S}-$manifold} if it is normal and $d\bar{\eta}^{\alpha} = \Phi$, for any  $\alpha\in \{1, \cdots, r\}$, where $\Phi(X, Y) = \bar{g}(X, \phi Y)$ for any $X$, $Y$ $\in \Gamma(T\bar{M})$. The normality condition is expressed by the vanishing of the tensor field $N= N_{\phi} + \sum^{r}_{\alpha=1} 2d\bar{\eta}^{\alpha}\otimes\bar{\xi_{\alpha}}$, $N_{\phi}$ being the Nijenhuis torsion of $\phi$.
\end{definition}
If  $(\bar{M}^{2n+r}, \bar{\phi}, \bar{\xi_{\alpha}},  \bar{\eta}^{\alpha})$ is an $\mathcal{S}-$manifold, then it is known \cite{B1} that
\begin{eqnarray}
(\bar{\nabla}_X\bar{\phi})Y &=&\bar{g}(\bar{\phi}X, \bar{\phi}Y)\bar{\xi} + \bar{\eta}(Y)\bar{\phi}^2(X)\label{eq:3.1}\\
\bar{\nabla}_X\bar{\xi}_{\alpha}&=&-\bar{\phi}X, \label{eq:3.2}
\end{eqnarray}
where $\bar{\xi}=\sum^r_{\alpha=1}\bar{\xi}_{\alpha}$ and $\bar{\eta}=\sum^r_{\alpha=1}\bar{\eta}^{\alpha}$. 

\begin{theorem}\cite{B1}
 An $\mathcal{S}-$manifold $M^{2n+r}$ has constant $\phi-$sectional curvature $c$ if and only if its curvature tensor field satisfies
\begin{eqnarray}
 && 4 \bar{R}(X, Y, Z)\nonumber\\
&& =(c+3r)\{\bar{g}(\bar{\phi}X, \bar{\phi}Z)\bar{\phi}^2Y - \bar{g}(\bar{\phi}Y, \bar{\phi}Z)\bar{\phi}^2X\}\nonumber \\
&&\quad  + (c-r)\{\Phi(Z, Y)\phi X - \Phi(Z, X)\phi Y + 2\Phi(X, Y)\phi Z \} \label{eq:3.3}\\
&& \quad-\bar{g}(\bar{\phi}Z, \bar{\phi}Y) \bar{\eta}(X)\bar{\eta}
+\bar{g}(\bar{\phi}Z, \bar{\phi}X)\bar{\eta}(Y)\bar{\eta}\nonumber\\
&&\quad-\bar{\eta}(Y)\bar{\eta}(Z)\bar{\phi}^2X
+ \bar{\eta}(Z)\bar{\eta}(X)\bar{\phi}^2Y\nonumber
\end{eqnarray}
 for any vector fields $X,  Y,  Z, W \in \Gamma(T\bar{M})$.
\end{theorem}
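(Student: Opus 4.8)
The plan is to prove both implications by splitting the curvature tensor into the part that meets the kernel distribution spanned by the $\bar{\xi}_\alpha$ and the part supported on the horizontal distribution $\mathcal{D}=\bigcap_{\alpha}\ker\bar{\eta}^{\alpha}$, and to treat these two pieces by entirely different mechanisms. The first step, carried out unconditionally, is to compute every component of $\bar{R}$ that involves a structure vector field using only the $\mathcal{S}$-manifold identities \eqref{eq:3.1} and \eqref{eq:3.2}, with no appeal to the curvature hypothesis. From $\bar{\nabla}_X\bar{\xi}_\alpha=-\bar{\phi}X$ and the torsion-freeness of $\bar{\nabla}$ one obtains
\[
\bar{R}(X,Y)\bar{\xi}_\alpha=(\bar{\nabla}_Y\bar{\phi})X-(\bar{\nabla}_X\bar{\phi})Y ,
\]
and substituting \eqref{eq:3.1} collapses this to $\bar{\eta}(X)\bar{\phi}^2Y-\bar{\eta}(Y)\bar{\phi}^2X$, expressed purely through $\bar{\phi}$, $\bar{\eta}$ and $\bar{g}$. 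Polarizing and lowering an index then reproduces exactly the last four lines of \eqref{eq:3.3}, namely all terms carrying a factor of $\bar{\eta}$ or a $\bar{\phi}^2$ acting in a $\bar{\xi}$-direction. These terms are therefore forced by the structure alone and are common to both directions of the equivalence.

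The reverse implication $(\Leftarrow)$ is then immediate: assuming \eqref{eq:3.3}, I would substitute $Y=Z=\bar{\phi}X$ and $W=X$ for a unit horizontal vector $X$ (so $\bar{\eta}^{\alpha}(X)=0$ for all $\alpha$ and $\bar{g}(X,X)=1$). Every $\bar{\eta}$-term drops out, and using $\bar{g}(\bar{\phi}X,\bar{\phi}X)=1$ and $\Phi(X,\bar{\phi}X)=-1$ the right-hand side reduces to the single value $c$, independent of the choice of $X$. Hence the $\phi$-holomorphic sectional curvature is the constant $c$.

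The forward implication $(\Rightarrow)$ is the substantial part, and it concerns the horizontal block. On $\mathcal{D}$ the $\mathcal{S}$-structure restricts to a Kähler-type structure, and I would first record that, for horizontal arguments, $\bar{R}$ obeys the invariance $\bar{R}(\bar{\phi}X,\bar{\phi}Y,\bar{\phi}Z,\bar{\phi}W)=\bar{R}(X,Y,Z,W)$ in addition to the usual Riemannian symmetries and the first Bianchi identity. Granting these, the hypothesis that $\bar{R}(X,\bar{\phi}X,\bar{\phi}X,X)=c$ for every unit horizontal $X$ determines the full horizontal curvature by the polarization device of Tanno: substituting $X\mapsto X+tY$ and $X\mapsto X+tY+sZ$ and extracting the coefficients of the mixed monomials reconstructs $\bar{R}(X,Y,Z,W)$ for horizontal $X,Y,Z,W$ in the model complex-space-form shape
\[
\tfrac{c'}{4}\{\bar{g}(X,W)\bar{g}(Y,Z)-\bar{g}(X,Z)\bar{g}(Y,W)+\Phi(X,W)\Phi(Y,Z)-\Phi(X,Z)\Phi(Y,W)+2\Phi(X,Y)\Phi(Z,W)\}.
\]
Splicing this horizontal expression together with the vertical terms from the first step, and then normalizing so that the recombined tensor returns $\phi$-holomorphic sectional curvature $c$, pins down the two occurring constants as $(c+3r)/4$ and $(c-r)/4$, which is precisely the splitting displayed in \eqref{eq:3.3}.

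The main obstacle is this last step. Two points require genuine care rather than routine algebra: first, verifying that the restriction of $\bar{R}$ to $\mathcal{D}$ really does satisfy the $\bar{\phi}$-invariance and Bianchi symmetries that make the polarization lemma applicable — this is not automatic in an arbitrary globally framed manifold and must be extracted from the normality condition together with $d\bar{\eta}^{\alpha}=\Phi$; and second, the bookkeeping by which the $r$ structure vector fields renormalize the effective holomorphic constant, so that a single $\phi$-sectional value $c$ is distributed into the two distinct coefficients $c+3r$ and $c-r$. Conceptually this shift is the O'Neill correction coming from the canonical submersion with totally geodesic $\bar{\xi}$-fibres, and making that accounting exact is where the proof will concentrate its effort.
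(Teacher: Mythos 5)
The paper offers no proof of this statement at all---it is quoted from Blair \cite{B1}---so your proposal can only be judged on its own terms. Your unconditional computation of the vertical curvature is sound: $\bar{R}(X,Y)\bar{\xi}_\alpha=(\bar{\nabla}_Y\bar{\phi})X-(\bar{\nabla}_X\bar{\phi})Y=\bar{\eta}(X)\bar{\phi}^2Y-\bar{\eta}(Y)\bar{\phi}^2X$ does follow from \eqref{eq:3.1}, \eqref{eq:3.2} and torsion-freeness, and the converse implication by direct substitution is routine. One caveat: your computation yields $\bar{R}(X,Y)\bar{\xi}_\beta$ equal to that bracket, whereas \eqref{eq:3.3} as printed equates $4\bar{R}(X,Y)\bar{\xi}_\beta$ to it; the $\bar{\eta}$-terms of \eqref{eq:3.3} are evidently missing a factor of $4$ relative to Blair's original, so your claim of ``exactly reproducing the last four lines'' should come with a correction of the statement rather than an assertion of agreement.

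The genuine gap is in the forward direction. You propose to establish K\"ahler-type symmetries on the horizontal distribution and then run the classical polarization, concluding that the horizontal block has ``the model complex-space-form shape'' with a single constant $c'$, to be renormalized afterwards. This cannot work. First, the identity the K\"ahler polarization actually uses is the two-fold invariance $\bar{R}(X,Y,\bar{\phi}Z,\bar{\phi}W)=\bar{R}(X,Y,Z,W)$, and this \emph{fails} on the horizontal distribution of an $\mathcal{S}$-manifold: differentiating \eqref{eq:3.1} and applying the Ricci identity shows that $\bar{R}(X,Y)\bar{\phi}Z-\bar{\phi}\bar{R}(X,Y)Z$ equals an explicit nonzero algebraic expression in $\bar{g}$, $\Phi$, $\bar{\eta}$; only the four-fold invariance survives for horizontal arguments. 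Second---and this shows the plan would give the wrong answer, not merely an incomplete one---the horizontal block of \eqref{eq:3.3} is \emph{not} a complex-space-form tensor: its $\bar{g}$-part and its $\Phi$-part carry the distinct coefficients $(c+3r)/4$ and $(c-r)/4$, while a single-constant model forces these to coincide. Both tensors are four-fold $\bar{\phi}$-invariant and both have $\phi$-holomorphic sectional curvature $c$, so constancy of $c(X)$ together with four-fold invariance does not determine the horizontal curvature, and splicing in the vertical terms (which contribute nothing to $c(X)$ when $X$ is horizontal) provides no mechanism for recovering the missing $r$. The coefficients $c+3r$ and $c-r$ must instead emerge from a polarization carried out with the corrected identity $\bar{R}(X,Y,\bar{\phi}Z,W)+\bar{R}(X,Y,Z,\bar{\phi}W)=(\textrm{explicit } r\textrm{-dependent terms})$; that is the real content of Blair's argument, and it is precisely the step your outline replaces with an unproved appeal to K\"ahler symmetry plus an O'Neill-type heuristic.
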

  
An $\mathcal{S}-$manifold $M^{2n+r}$ with constant $\phi-$sectional curvature $c$ is called an {\it $\mathcal{S}-$space form} and denoted by $M^{2n+r}(c)$.

When $r=1$, an $\mathcal{S}-$space form $M^{2n+1}(c)$ reduces to a Sasakian space form \cite{B2}
and (\ref{eq:3.3}) reduces to 
\begin{eqnarray}
 && 4 \bar{R}(X, Y, Z)\nonumber\\
&& =(c+3)\{\bar{g}(Y, Z)X - \bar{g}(X, Z)Y\}\nonumber \\
&&\quad  + (c-1)\{\Phi(X, Z)\phi Y - \Phi(Y, Z)\phi X + 2\Phi(X, Y)\phi Z  \label{eq:3.4}\\
&& \quad-\bar{g}(Z, Y) \eta(X)\eta + \bar{g}(Z, X)\eta(Y)\eta\nonumber\\
&&\quad- \eta(Y)\eta(Z)X + \eta(Z)\eta(X)Y\}\nonumber
\end{eqnarray}
 for any vector fields $X,  Y,  Z, W \in \Gamma(T\bar{M})$, where $\xi_1= \xi$ and $\eta^1=\eta$.

When $r=0$, an $\mathcal{S}-$space form $M^{2n}(c)$ becomes a complex space form and (\ref{eq:3.3}) moves to 
\begin{eqnarray}
4R(X, Y, Z) &=& c\{ \bar{g}(Y, Z)X - \bar{g}(X, Z)Y\nonumber \\
& &+ \Phi(X, Z)\phi Y - \Phi(Y, Z)\phi X + 2\Phi(X, Y) \phi Z\}\label{eq:3.5}
\end{eqnarray}

\begin{theorem}
Let $(\bar{M}^{2n+r}, \bar{\phi}, \bar{\xi_{\alpha}},  \bar{\eta}^{\alpha})$, $\alpha\in \{1, \cdots, r\}$, $(n \geq 2)$ be a $\mathcal{S}-$manifold. Then $M^{2n+1}$  is of constant $\phi-$holomorphic sectional curvature if and only if
\begin{equation}
R(X, \phi X)X {\textit{ is proportional to }} \phi X\label{eq:3.6}
\end{equation}
for every vector field $X$ such that $g(X, \bar{\xi_{\alpha}})=0$ for any $\alpha\in \{1, \cdots, r\}$.
\end{theorem}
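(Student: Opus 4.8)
The plan is to mirror the proof of Theorem 1.3, with Blair's curvature characterization (Theorem 3.1, equation (\ref{eq:3.3})) playing the role that the generalized $g.f.f$-space form expression (\ref{eq:2.4}) played there. The guiding observation is that an $\mathcal{S}$-manifold is in particular a globally framed $f$-manifold, and that an $\mathcal{S}$-manifold of constant $\phi$-sectional curvature $c$ is a generalized $g.f.f$-space form: rewriting (\ref{eq:3.3}) as $4R$ exhibits it as an instance of (\ref{eq:2.4}) with $F_1 = (c+3r)/4$, $F_2 = (c-r)/4$ and constant coefficient functions $F_{ij}$ governing the $\bar\eta^\alpha$-terms. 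Consequently Theorem 2.1 applies to $\mathcal{S}$-space forms -- in particular the symmetry $R(\phi X, \phi Y, \phi Z, \phi W) = R(X, Y, Z, W)$ -- and the forward direction together with the algebraic template of the converse transfers directly from Theorem 1.3.

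For the forward implication I would assume $\bar M$ has constant $\phi$-holomorphic sectional curvature $c$, so that by Theorem 3.1 its curvature is given by (\ref{eq:3.3}). Taking a unit vector $X$ orthogonal to every $\bar\xi_\alpha$ and substituting $Y = \phi X$, $Z = X$, the relations (\ref{eq:2.1})--(\ref{eq:2.2}) give $\bar\eta^\alpha(X) = \bar\eta^\alpha(\phi X) = 0$ and $\bar\phi^2 X = -X$, so every $\bar\eta^\alpha$-term in (\ref{eq:3.3}) drops out; the two surviving brackets then collapse to a single multiple of $\phi X$, yielding $R(X, \phi X)X$ proportional to $\phi X$ exactly as in (\ref{eq:2.5}). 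No computation beyond the one already carried out for Theorem 1.3 is required.

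For the converse I would replay Tanno's polarization argument of Theorem 1.3 on the horizontal distribution $\{X : \bar g(X, \bar\xi_\alpha) = 0,\ \forall\alpha\}$. Assuming $R(X, \phi X)X \parallel \phi X$ for every such $X$, I split into $n \geq 3$ and $n = 2$. For $n \geq 3$, from an orthonormal horizontal pair $\{X, Y\}$ with $\bar g(\phi X, Y) = 0$ I build the rotated orthonormal pair $\dot X = (X + Y)/\sqrt 2$, $\dot Y = (\phi X - \phi Y)/\sqrt 2$ satisfying $\bar g(\phi\dot X, \dot Y) = 0$; expanding $0 = R(\dot X, \phi\dot X, \dot Y, \dot X)$ by multilinearity and the curvature symmetries produces (\ref{eq:2.6}), whose two mixed terms vanish by the hypothesis, giving $c(X) = c(Y)$, after which the span argument equalizes $c$ over all $\phi$-holomorphic sections. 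For $n = 2$ I expand $R(\ddot X, \phi\ddot X, \ddot X)$ for $\ddot X = aX + bY$ with $a^2 + b^2 = 1$, $a^2 \neq b^2$, match the $\phi X$- and $\phi Y$-coefficients against $c(\ddot X)\phi\ddot X$ as in (\ref{eq:2.7})--(\ref{eq:2.10}), and solve the linear system to obtain $c(X) = c(\ddot X)$, hence constancy.

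The one step I expect to need real care is justifying, in the converse, that the expansion of $R(\dot X, \phi\dot X, \dot Y, \dot X)$ really collapses to the four terms of (\ref{eq:2.6}) -- this hinges on the $\phi$-invariance of the $(0,4)$ curvature on horizontal vectors, which for a space form is Theorem 2.1(2) but which cannot be invoked in the converse before constancy is known. I would instead derive the needed identity directly from the $\mathcal{S}$-structure equations (\ref{eq:3.1})--(\ref{eq:3.2}), which fix $\bar\nabla\bar\phi$ and $\bar\nabla\bar\xi_\alpha$ and thereby force $\phi$-invariance of $R$ up to terms along the $\bar\xi_\alpha$ that vanish on the horizontal distribution. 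The remaining manipulations are the same bookkeeping as in the proof of Theorem 1.3.
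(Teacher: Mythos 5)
Your proposal follows essentially the same route as the paper: the paper's entire proof of this theorem is the one-line observation that an $\mathcal{S}$-space form is a special case of a generalized $g.f.f$-space form, so the result follows from Theorem 1.3 and (\ref{eq:3.3}), which is exactly your reduction via $F_1=(c+3r)/4$, $F_2=(c-r)/4$. Your additional concern about justifying the $\phi$-invariance of $R$ in the converse before constancy is known is a legitimate refinement the paper does not address, but it does not change the overall strategy.
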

\begin{proof}
An $\mathcal{S}-$space form is a special case of $g.f.f-$ space form, and hence the proof  follows from Theorem $1.3$ and (\ref{eq:3.3}). 
\end{proof}

\begin{theorem} (cf.\;Tanno \cite{T})
A Sasakian manifold $\geq 5$  is of constant $\phi-$sectional curvature if and only if
\begin{equation}
R(X, \phi X)X {\textrm{ is proportional to } } \phi X\label{eq:1.3}
\end{equation}
for every tangent vector $X$ such that $g(X, \xi)=0$.
\end{theorem}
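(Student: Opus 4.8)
The plan is to derive this statement as the $r=1$ specialization of Theorem 3.3, exactly as Theorem 2.2 was obtained from Theorem 1.3 by setting $r=0$. The first step is to invoke the standard fact, recorded via \cite{B2} in the discussion following Theorem 3.2, that a Sasakian manifold is precisely an $\mathcal{S}$-manifold with $r=1$: its characteristic vector field $\xi$ together with the contact form $\eta=\bar\eta^1$ furnishes a framed $f$-structure satisfying (2.1) and (2.2), the manifold is normal, and $d\eta=\Phi$. Under this identification the structure equations (3.1) and (3.2) collapse to the classical Sasakian identities $(\bar\nabla_X\bar\phi)Y=\bar g(X,Y)\bar\xi-\bar\eta(Y)X$ and $\bar\nabla_X\bar\xi=-\bar\phi X$, so nothing beyond the cited reference must be checked at this stage.

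Next I would verify that the hypotheses and the conclusion translate correctly into the single-frame setting. The dimension restriction $\dim M\geq 5$ is exactly $2n+1\geq 5$, i.e. $n\geq 2$, which is the range in which Theorem 3.3 is valid. For $r=1$ the orthogonality requirement $\bar g(X,\bar\xi_\alpha)=0$ for all $\alpha$ reduces to the single condition $\bar g(X,\xi)=0$; the $\phi$-holomorphic sectional curvature $c(X)=R(X,\phi X,\phi X,X)$ of the plane spanned by $X$ and $\phi X$ with $\bar g(X,\xi)=0$ is precisely the $\phi$-sectional curvature; and the proportionality statement (3.6) coincides with (1.3).

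With these identifications in place, the two directions follow at once. Substituting $r=1$ into (3.3) yields the Sasakian space-form curvature tensor (3.4), and a direct contraction gives $R(X,\phi X)X=c\,\phi X$ for $X$ orthogonal to $\xi$, which is the proportionality (1.3); conversely, the proportionality hypothesis is exactly (3.6), so Theorem 3.3 applies verbatim and forces constant $\phi$-sectional curvature. I expect the only delicate point to be bookkeeping rather than computation: one must confirm that the single-frame case genuinely falls under the $n\geq 2$ hypothesis of Theorem 3.3 and that the proportionality factor recovered from the contraction is the globally constant $c$ appearing in (3.4), not an a priori point-dependent scalar.
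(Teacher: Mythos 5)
Your proposal is correct and follows essentially the same route as the paper: identify a Sasakian manifold as the $r=1$ case of an $\mathcal{S}$-manifold, note that (3.3) specializes to the Sasakian space-form tensor (3.4), and invoke the $\mathcal{S}$-manifold theorem (which the paper numbers as Theorem 3.2, not 3.3) for both directions. Your version merely spells out the bookkeeping --- the reduction of the structure equations, the translation $2n+1\geq 5\Leftrightarrow n\geq 2$, and the collapse of the orthogonality condition to $g(X,\xi)=0$ --- which the paper leaves implicit.
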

\begin{proof}
When $r=1$, an $\mathcal{S}-$space form $M^{2n+1}(c)$ reduces to a Sasakian space form.
The proof follows from (\ref{eq:3.4}) and Theorem $3.2$.
\end{proof}

\begin{theorem} (cf.\;Kosmanek \cite{K})
A K$\ddot{a}$hlerian manifold $\geq 4$  is of constant $\phi-$sectional curvature if and only if
\begin{equation}
R(X, \phi X)X {\textrm{ is proportional to } } \phi X\label{eq:1.3}
\end{equation}
for every tangent vector $X$  on $M$.
\end{theorem}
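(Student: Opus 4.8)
The plan is to obtain this statement as the degenerate case $r=0$ of the preceding theory, in which the entire framed $f$-structure collapses to a K\"ahler structure. First I would note that when $r=0$ there are no characteristic fields $\bar{\xi}_\alpha$, so $(\ref{eq:2.1})$ forces $\bar{\phi}^2=-I$; hence $\bar{\phi}$ becomes an almost complex structure $J$ and $(\ref{eq:2.2})$ becomes the Hermitian relation $\bar{g}(JX,JY)=\bar{g}(X,Y)$. Moreover $(\ref{eq:3.1})$ then reads $(\bar{\nabla}_X\bar{\phi})Y=0$, since $\bar{\xi}=\sum_\alpha\bar{\xi}_\alpha$ and $\bar{\eta}=\sum_\alpha\bar{\eta}^\alpha$ vanish, so $J$ is parallel and the manifold is K\"ahler. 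The orthogonality constraint $g(X,\bar{\xi}_\alpha)=0$ is now vacuous, so the hypothesis $(\ref{eq:1.3})$ is imposed on \emph{every} tangent vector $X$, exactly as in Kosmanek's statement, and the $\phi$-holomorphic sectional curvature $c(X)=R(X,\phi X,\phi X,X)$ becomes the ordinary holomorphic sectional curvature. Under the same specialization the curvature identity $(\ref{eq:3.3})$ of Theorem $3.2$ collapses precisely to $(\ref{eq:3.5})$, the curvature of a complex space form of constant holomorphic sectional curvature $c$.

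For the ``only if'' direction I would assume constant holomorphic sectional curvature $c$ and substitute $Y=\phi X=JX$, $Z=X$ directly into $(\ref{eq:3.5})$. Using $\phi^2=-I$, $\Phi(X,X)=\bar{g}(X,\phi X)=0$ and $\Phi(X,\phi X)=-\bar{g}(X,X)$, the right-hand side simplifies to a multiple of $JX$, giving $R(X,\phi X)X=c\,\bar{g}(X,X)\,\phi X$, which is exactly $(\ref{eq:1.3})$. The converse ``if'' direction is then read off from Theorem $1.3$ at $r=0$ (equivalently Theorem $2.2$): its hypothesis $(\ref{eq:1.4})$, with the now-vacuous orthogonality condition, coincides with $(\ref{eq:1.3})$, and its conclusion is constancy of the ($\phi$-)holomorphic sectional curvature.

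The main obstacle is the dimension threshold. Tanno's almost Hermitian result (Theorem $1.1$, restated as Theorem $2.2$) is only valid for $2n>4$, whereas Kosmanek's K\"ahler statement is claimed for $2n\ge 4$, i.e.\ down to $n=2$. Thus the genuine additional content lies entirely in the borderline four-dimensional case $n=2$, which the general almost Hermitian argument does not reach. Here I would rely on the $n=2$ branch in the proof of Theorem $1.3$: writing $\ddot{X}=aX+bY$ with $a^2+b^2=1$ and $a^2\neq b^2$, expanding $R(\ddot{X},\phi\ddot{X},\ddot{X})$ as in $(\ref{eq:2.7})$, and solving the linear system $(\ref{eq:2.9})$--$(\ref{eq:2.10})$ to conclude $c(X)=c(\ddot{X})$. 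The step that makes this go through for K\"ahler manifolds, but not for general almost Hermitian ones, is the curvature symmetry $R(\phi X,\phi Y,\phi Z,\phi W)=R(X,Y,Z,W)$ of property $(2)$ in Theorem $2.1$ (cf.\ Remark $2.1$), which is automatic once $J$ is parallel; verifying that this symmetry supplies the relations among the $R(X,\phi Y,\dots)$ terms needed to close the $n=2$ computation is the point that requires care.
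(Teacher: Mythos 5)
Your proposal is correct and follows essentially the same route as the paper: specialize to $r=0$ so that the structure becomes K\"ahler, read off the ``only if'' direction from the complex space form identity $(3.5)$, and obtain the ``if'' direction from Theorem $3.2$ (equivalently Theorem $1.3$ at $r=0$, whose $n=2$ branch covers the four-dimensional case). The paper's own proof is exactly this two-sentence reduction; your version merely supplies the details it leaves implicit (parallelism of $J$ from $(3.1)$ when $\bar{\xi}$ and $\bar{\eta}$ vanish, vacuousness of the orthogonality constraint, and the role of the curvature symmetry of Theorem $2.1(2)$ in the borderline case).
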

\begin{proof}
When $r=0$, an $\mathcal{S}-$space form $M^{2n+1}(c)$ reduces to a K$\ddot{a}$hlerian space form.
The proof follows from (\ref{eq:3.5}) and Theorem $3.2$.
\end{proof}

\vskip 3cm

\end{document}